\newtheorem{theorem}{Theorem}
\newtheorem{proposition}{Proposition}
\newtheorem{remark}{Remark}
\newtheorem{lemma}{Lemma}
\newtheorem{conjecture}{Conjecture}
\def\CC{\mathbb{C}}
\def\PP{\mathbb{P}}
\def\QQ{\mathbb{Q}}
\def\RR{\mathbb{R}}
\def\ZZ{\mathbb{Z}}
\newcommand{\mtwo}[4]{\left(
        \begin{matrix}#1&#2\\#3&#4
        \end{matrix}\right)}
\begin{document}

\markboth{C. Casta\~no-Bernard \& F. Luca}{On Petersson's partition limit formula}

\title{On Petersson's partition limit formula}

\author{Carlos Casta\~no-Bernard}
\address{CUICBAS, Universidad de Colima, Address\\
Colima, Mexico}
\email{ccastanobernard@gmail.com}

\author{Florian Luca}
\address{School Of Mathematics, Wits University, Address\\
Johannesburg, South Africa\\
Research Group in Algebraic Structures and Applications\\
King Abdulaziz University, Address\\
Jeddah, Saudi Arabia\\
Centro de Ciencias Matem\'aticas, UNAM, Address\\
Morelia, Mexico}
\email{florian.luca@wits.ac.za}

\maketitle

\begin{abstract}
For each prime $p\equiv 1\pmod{4}$ consider the Legendre character $\chi=(\frac{\cdot}{p})$.
Let $p_\pm(n)$
be the number of partitions of $n$ into parts $\lambda>0$
such that $\chi(\lambda)=\pm 1$.
Petersson proved a beautiful limit formula for the ratio of $p_+(n)$ to $p_-(n)$
as $n\to\infty$
expressed in terms of important invariants of the real quadratic field $K=\QQ(\sqrt{p})$.
But his proof is not illuminating and Grosswald conjectured a more natural proof
using a Tauberian converse of the Stolz-Ces\`aro theorem.
In this paper we suggest an approach to address Grosswald's conjecture.
We discuss a monotonicity conjecture which looks quite natural
in the context of the monotonicity theorems of Bateman-Erd\H{o}s.
\end{abstract}

\section{Introduction}

Let $K$ be a real quadratic field, $h_K$ its class number,
and $\varepsilon_K>1$ its fundamental unit.
Let us assume that the discriminant of $K$ is a prime number $p$,
so in particular $p\equiv 1\pmod{4}$.
Consider the Nebentypus cover $X_\chi(p)$
of degree two of the modular curve $X_0(p)$ introduced by Shimura~\cite[p. 174]{shimura:lnm}
in his work towards a theory of ``real multiplication''.\footnote{
Shimura determines division points of certain one-dimensional factors
of the Jacobian $J_\chi(p)$ of $X_\chi(p)$ that generate abelian extensions of $K$.
These one-dimensional factors are cut out by the action of the Hecke algebra and
the involution $w_p$ over $K$ on $J_\chi(p)$.}
The Fricke involution $w_p$ of $X_\chi(p)$ is defined over $K$ and
the curve $X_\chi(p)$ corresponds to the congruence subgroup
\begin{equation*}
\Gamma_\chi(p)=
\left\{\mtwo{a}{b}{c}{d}\in\Gamma_0(p)\,\colon\,
\chi(a)= 1\right\},
\end{equation*}
where $\chi$ denotes the Legendre character $\chi=(\frac{\cdot}{p})$ of conductor $p$.
To simplify the discussion here, we will assume that $p>5$.
Let $f$ be the modular unit on the curve $X_\chi(p)$
introduced by Ogg an Ligozat, as described by Mazur~\cite[pp.~107--108]{mazur:eisen}.
In this paper we define a certain normalization $u$ of $f$ and 
use its Fourier expansion and that of
the composite $\breve{u} = u\circ w_p$
to generalize a limit formula due Schur.
(See Proposition~\ref{prop:schur}.)
We use this limit formula together with
a monotonicity theorem of Bateman and Erd\H{o}s~\cite{erdos:bateman},
a consequence of the work of Meinardus~\cite{Mein} (described in the appendix),
and a ratio Tauberian theorem due to Sato~\cite{sato:ratio},
to prove the following.
\begin{theorem}\label{thm}
With the above assumptions,
for each $n\in\ZZ_{\geq 0}$ let $p_\pm(n)$ denote the number of partitions
\begin{equation*}
n=\lambda_1 + \dots +\lambda_r
\end{equation*}
with parts $\lambda_i\in\ZZ_{>0}$ such that $\lambda_1\leq\lambda_2\leq\dots\leq\lambda_r$ and $\chi(\lambda_i)=\pm 1$,
for $i = 1,2,\dots,r$.
Then
\begin{equation*}
\lim_{\nu\to \infty}\frac{\sum_{n=0}^\nu p_+(n)}{\sum_{n=0}^\nu p_-(n)} = \varepsilon_K^{h_K}.
\end{equation*}
\end{theorem}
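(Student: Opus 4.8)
The plan is to deduce the theorem from the boundary behaviour of the ratio of the two partition generating functions by means of a ratio Tauberian theorem, with eventual monotonicity of the restricted partition functions playing the role of the Tauberian condition. Write
\begin{equation*}
P_\pm(q)=\sum_{n\ge0}p_\pm(n)\,q^n=\prod_{\chi(\lambda)=\pm1}\frac{1}{1-q^\lambda},
\end{equation*}
so that $P_+(q)/P_-(q)=\prod_{\lambda\ge1}(1-q^\lambda)^{-\chi(\lambda)}$ is, after the normalization fixed above, the Fourier series at $\infty$ of the modular unit $u$ on $X_\chi(p)$. The first step is Proposition~\ref{prop:schur}: comparing the Fourier expansion of $u$ with that of $\breve u=u\circ w_p$, and using the functional equation of $L(s,\chi)$, which for the even quadratic character of prime conductor $p$ gives $L'(0,\chi)=h_K\log\varepsilon_K$, one obtains the generalized Schur limit formula
\begin{equation*}
\lim_{q\to1^-}\frac{P_+(q)}{P_-(q)}=\varepsilon_K^{h_K}.
\end{equation*}
Since $\sum_{\nu\ge0}\bigl(\sum_{n=0}^\nu p_\pm(n)\bigr)q^\nu=P_\pm(q)/(1-q)$, this is exactly the statement that the ratio of the Abel means of the partial-sum sequences tends to $\varepsilon_K^{h_K}$.

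The second step is to verify the hypotheses on the Tauberian side. Since $p>5$, the set $S_-=\{\lambda\in\ZZ_{>0}\colon\chi(\lambda)=-1\}$ is a union of residue classes modulo $p$ of density $\tfrac12(1-1/p)$, has greatest common divisor $1$, and stays ``large enough'' after the deletion of any one of its elements; hence the monotonicity theorem of Bateman and Erd\H{o}s~\cite{erdos:bateman} shows that $p_-(n)$ is nondecreasing for all sufficiently large $n$, and the same applies to $p_+(n)$ with $S_+=\{\lambda\colon\chi(\lambda)=1\}$. Independently, the consequence of Meinardus' theorem recalled in the appendix, applied to the Dirichlet series $D_-(s)=\sum_{\lambda\in S_-}\lambda^{-s}=\tfrac12(1-p^{-s})\zeta(s)-\tfrac12 L(s,\chi)$ --- holomorphic on $\CC$ except for a simple pole at $s=1$ --- yields an asymptotic of the shape $p_-(n)\sim C_-\,n^{\kappa}\exp(D\sqrt{n})$ with $D>0$; in particular $\sum_{n=0}^\nu p_-(n)\to\infty$ and $p_-(\nu+1)/p_-(\nu)\to1$, so that $(p_-(n))_n$ lies in the class to which Sato's theorem applies.

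The third step feeds the generalized Schur limit formula, together with the monotonicity and slow variation of $(p_-(n))_n$ just established, into the ratio Tauberian theorem of Sato~\cite{sato:ratio}, whose conclusion is precisely
\begin{equation*}
\lim_{\nu\to\infty}\frac{\sum_{n=0}^\nu p_+(n)}{\sum_{n=0}^\nu p_-(n)}=\varepsilon_K^{h_K},
\end{equation*}
the assertion of the theorem. The step I expect to be the main obstacle is the matching of hypotheses in this last move: Sato's ratio Tauberian theorem is not phrased for ratios of power series with coefficients growing like $\exp(D\sqrt{n})$ in exactly the form needed here, so one must check that the Bateman--Erd\H{o}s monotonicity --- rather than mere positivity of the coefficients --- is the correct Tauberian hypothesis and that the comparison sequence coming out of Meinardus' theorem is admissible. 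In addition, obtaining the limit as $\varepsilon_K^{h_K}$ rather than its reciprocal or some power is a normalization point that has to be tracked carefully through Proposition~\ref{prop:schur}; the verification of the Bateman--Erd\H{o}s hypotheses for the residue and non-residue sets modulo $p$ is routine.
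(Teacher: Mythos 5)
Your proposal follows the same route as the paper: establish the Abel-type limit of Proposition~\ref{prop:schur} and then upgrade it to a Ces\`aro-type limit via Sato's ratio Tauberian theorem, with Bateman--Erd\H{o}s and Meinardus supplying the side conditions. The one substantive divergence is in which Tauberian hypotheses you verify, and here your choices do not quite match what Sato's Theorem~3.2 asks for. The paper checks two things: (a) \emph{relative boundedness}, i.e.\ $p_-(n)<p_+(n)$ for all large $n$ (inequality~(\ref{ineq}), extracted from Meinardus in the appendix; the remark notes $p_-(n)=O(p_+(n))$ would suffice), and (b) the slow-variation condition on the \emph{partial sums} of $p_+$, namely $\sum_{m\le\mu}p_+(m)/\sum_{n\le\nu}p_+(n)\to1$ when $\mu/\nu\to1$, which is Lemma~\ref{lem:ccb} and is proved from the corollary after Theorem~6 and from Theorem~5 (with $k=-1$) of Bateman--Erd\H{o}s, not from any monotonicity statement. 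You instead verify eventual monotonicity of $p_\pm(n)$ and $p_-(\nu+1)/p_-(\nu)\to1$; the former is not a hypothesis of Sato's theorem (relative boundedness is --- it is in the title of his paper), and the latter concerns the wrong sequence (the coefficients rather than their partial sums). Your conditions can be parlayed into the right ones --- the full Meinardus asymptotic for $p_-(n)$ that you invoke implies the partial-sum regularity --- but if you also use the Meinardus asymptotic for $p_+(n)$ to get relative boundedness you are dangerously close to re-deriving Petersson's formula~(\ref{eqn:petersson}) itself, which is exactly what this proof is designed to avoid; the paper is careful to take only the inequality $p_-(n)<p_+(n)$ from the appendix and to get the regularity of the partial sums purely from the elementary Bateman--Erd\H{o}s monotonicity theorems. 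So: same architecture, but you should replace your second step by the verification of relative boundedness plus Lemma~\ref{lem:ccb}.
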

Of course, the above theorem follows directly from
the classical Stolz-Ces\`aro theorem~\cite[p. 14]{polyaszego:aufgaben}
applied to a celebrated partition limit formula
due to Petersson~\cite{petersson:partitionen},
\begin{equation}\label{eqn:petersson}
\lim_{n\to\infty}\frac{p_+(n)}{p_-(n)} = \varepsilon_K^{h_K}.
\end{equation}
But our proof does not use Eq.~(\ref{eqn:petersson}).
In fact, Petersson's partition limit formula follows directly from Theorem~\ref{thm}
and a converse of the Stolz-Ces\`aro theorem\footnote{P\u{a}lt\u{a}nea stated his theorem as 
a converse of L'H\^opital rule for locally integrable functions.
But applying his theorem to suitable step functions yields a converse of the Stolz-Ces\`aro theorem.}
due to P\u{a}lt\u{a}nea~\cite{paltanea:criterion},
provided we assume a special case of
Conjecture~\ref{conj:ccb}, discussed in Section~\ref{sec:conj}.
This is a monotonicity conjecture
which looks quite natural in the context of the
monotonicity theorems of Bateman and Erd\H{o}s~\cite{erdos:bateman}.

Petersson obtained Eq.~(\ref{eqn:petersson})
by first establishing the asymptotic expression for $p_+(n)$ and for $p_-(n)$ separately,
after a rather laborious calculation.
So given the simplicity of Eq.~(\ref{eqn:petersson}),
it seems desirable to have a simpler proof.
In fact, Grosswald~\cite{grosswald:elementary} conjectured that a
monotonicity theorem of Bateman and Erd\H{o}s~\cite{erdos:bateman}
together with a suitable Tauberian converse to 
the Stolz-Ces\`aro theorem, would furnish a nicer proof of Eq.~(\ref{eqn:petersson}).
It is hoped that our approach can shed new light on Grosswald's conjecture.
Moreover,
the key role played here by the modular unit $u$
on $X_\chi(p)$ and the Fricke involution $w_p$ of $X_\chi(p)$ may help
pave the way towards an explanation
why $h_K$ and $\varepsilon_K$ appear in Eq.~(\ref{eqn:petersson}),
a question which was raised by Petersson~\cite{petersson:partitionen}.

The rest of the paper is organized as follows.
In Section~\ref{sec:fourier} we use Klein forms to define the modular unit $u$ on $X_\chi(p)$.
Then we use the class number formula for real quadratic fields
to obtain the constant term of the Fourier expansion of $u$.
We express the Fourier expansion of $\breve{u}$ as an infinite product
and conclude this section with a discussion of the $p=5$ case,
where we express the Rogers-Ramanujan continued fraction in terms of $\breve{u}$.
In Section~\ref{sec:limit} we use the Fourier expansions of $u$ and of $\breve{u}$
to obtain a generalization of a limit formula due to Schur,
which we use to prove Theorem~\ref{thm}.
In Section~\ref{sec:conj} we discuss Conjecture~\ref{conj:ccb},
including the numerical evidence that supports it,
and also suggest an open question.
In the appendix Luca shows how Eq.~(\ref{eqn:petersson})
follows from the work of Meinardus.
The appendix also includes a discussion of the inequality~(\ref{ineq}),
which is used in our proof of Theorem~\ref{thm}.

\section{Two Fourier expansions}\label{sec:fourier}
Following Kubert and Lang~\cite[p. 27]{lang:munits},
for each $z\in\CC$ and each lattice $L\subset\CC$ we may define the \textit{Klein form}
\begin{equation*}
\mathfrak{k}(z, L) = e^{-\frac{1}{2}\eta(z,L)z}\sigma(z,L),
\end{equation*}
where $\sigma(z,L)$ is the Weierstra\ss\ sigma-function and $z\mapsto\eta(z,L)$
is the $\RR$-linear function that gives the quasi-periods of the Weierstra\ss\ zeta-function
with respect to the lattice $L$.
Put $\mathfrak{k}_a(\tau) = \mathfrak{k}(z,L_\tau)$,
where the point $a=(a_1,a_2)\in\RR^2$
is uniquely determined by $z = a_1\tau + a_2$ and $L_\tau = \ZZ\tau\oplus \ZZ$,
with $\tau$ lying in the Poincar\'e upper-half plane
\begin{equation*}
\mathcal{H}=\{z\in\CC:\Im(z)>0\}.
\end{equation*}
As before, consider a prime number $p>5$ such that $p\equiv 1\pmod{4}$ and define
\begin{equation*}
u(\tau) = \prod_{r=1}^{\frac{p-1}{2}}\mathfrak{k}_{(0,r/p)}(\tau)^{\chi(r)}.
\end{equation*}
As we shall see, up to a multiplicative constant this is Ogg and Ligozat's modular unit $f$ on
the Nebentypus cover $X_\chi(p)$
described by Mazur~\cite[pp.~107--108]{mazur:eisen}.
The cover $X_\chi(p)$ has 4 cusps,
namely $\infty$ and $\overline{\infty}$ conjugate over $K$,
above the cusp $\infty$ of $X_0(p)$
and cusps $o$ and $\overline{o}$ defined over $\QQ$, above the cusp $o$ of $X_0(p)$.
Mazur also showed that
\begin{equation*}
(f) = \frac{1}{2}B_{2,\chi}( (o) - (\overline{o}) ),
\end{equation*}
where $B_{n,\chi}$ is the generalized $n$-th Bernoulli number attached to $\chi$ defined by
\begin{equation*}
\sum_{n=0}^\infty B_{n,\chi} \frac{X^n}{n!} = \sum_{r=1}^p \chi(r) \frac{X e^{rX}}{e^{pX}-1}.
\end{equation*}
The Fricke involution $w_p$ of $X_\chi(p)$ interchanges the cusps $o$ and $\infty$
(resp. $\overline{o}$ and $\overline{\infty}$).
So the composite $\breve{u} = u\circ w_p$
has a zero of order $\frac{1}{2}B_{2,\chi}$ at the cusp $\infty$ of $X_\chi(p)$.
The following proposition provides further details.
\begin{proposition}\label{prop}
We have Fourier expansions
\begin{equation*}
\breve{u}(\tau) = q^{\frac{1}{2}B_{2,\chi}}\prod_{n=1}^\infty(1-q^n)^{\chi(n)},
\end{equation*}
and
\begin{equation*}
u(\tau)=\varepsilon_K^{-h_K}(1-\sqrt{p}\,q_\tau+\dots),
\end{equation*}
where $q_\tau=e^{2\pi i\tau}$
and $\tau$ lies in the Poincar\'e upper-half plane $\mathcal{H}$.
Actually, 
\begin{equation*}
u = \varepsilon_K^{-h_K} f,
\end{equation*}
where $f$ is the modular unit of Ogg and Ligozat.
\end{proposition}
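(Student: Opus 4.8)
The plan is to compute the $q$-expansions of both $u$ and $\breve u = u \circ w_p$ directly from the product formula $u(\tau) = \prod_{r=1}^{(p-1)/2}\mathfrak{k}_{(0,r/p)}(\tau)^{\chi(r)}$, using the known Fourier expansion of the Klein forms $\mathfrak{k}_a(\tau)$, and then to identify the constant term of the resulting expansion with $\varepsilon_K^{-h_K}$ by means of the analytic class number formula. First I would recall the standard product expansion of a Klein form: writing $q = q_\tau = e^{2\pi i\tau}$ and $q_z = e^{2\pi i z}$ for $z = a_1\tau + a_2$, one has, up to an explicit elementary factor,
\begin{equation*}
\mathfrak{k}_a(\tau) = -q^{\frac{1}{2}B_2(a_1)}e^{\pi i a_2(a_1-1)}(1-q_z)\prod_{n=1}^\infty(1-q^n q_z)(1-q^n q_z^{-1}),
\end{equation*}
where $B_2(x) = x^2 - x + \frac16$ is the second Bernoulli polynomial. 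For the divisor $a = (0, r/p)$ the exponent $\frac12 B_2(a_1)$ vanishes, so each factor contributes no power of $q$; thus $u$ itself is a true modular function on $X_\chi(p)$ whose $q$-expansion begins with a nonzero constant. Taking the product over $r$ weighted by $\chi(r)$ and collecting the $q^0$ term, the constant term of $u$ is a finite product over $r$ of $(1 - \zeta_p^{\,r})^{\chi(r)}$ (times roots of unity that must cancel because $u$ is defined over $\mathbb{R}$, indeed over $K$), where $\zeta_p = e^{2\pi i/p}$.

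The next step is to evaluate this cyclotomic product. The quantity $\prod_{r}(1-\zeta_p^r)^{\chi(r)}$ over $r$ in a set of representatives with $\chi(r) = \pm 1$ is, up to sign and a power of $\zeta_p$, exactly the standard Gauss-sum / cyclotomic-unit expression that the class number formula for the real quadratic field $K = \mathbb{Q}(\sqrt p)$ evaluates: one has $\prod_{\chi(r)=1}(1-\zeta_p^r)\big/\prod_{\chi(n)=-1}(1-\zeta_p^n) = \varepsilon_K^{\pm h_K}$ up to a root of unity, this being precisely Dirichlet's class number formula in its finite (cyclotomic unit) form, equivalently the statement that the relevant cyclotomic unit has index $h_K$ in the full unit group of the real cyclotomic field, specialized to the quadratic subfield. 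Comparing signs and using $\varepsilon_K > 1$ together with the normalization that $u$ should agree with $\varepsilon_K^{-h_K}f$ fixes the exponent as $-h_K$, giving $u(\tau) = \varepsilon_K^{-h_K}(1 + c_1 q + \cdots)$. A short separate computation of the $q^1$-coefficient from the product — only the $(1-q^n q_z^{\pm1})$ factors with $n=1$ and the leading $(1-q_z)$ factors contribute linearly — yields $c_1 = -\sqrt p\,\varepsilon_K^{-h_K}\big/\varepsilon_K^{-h_K}$; more precisely, after factoring out the constant term, one gets $u(\tau) = \varepsilon_K^{-h_K}(1 - \sqrt p\, q + \cdots)$, where the coefficient $\sqrt p$ emerges from $\sum_{r}\chi(r)\big(\zeta_p^{\,r}/(1-\zeta_p^{\,r}) + \zeta_p^{-r}/(1-\zeta_p^{-r})\big)$, a Gauss-sum evaluation equal to $-\sqrt p$ for $p \equiv 1 \pmod 4$.

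For $\breve u = u\circ w_p$, I would apply the $w_p$-action to the product of Klein forms. The Fricke involution $w_p\colon \tau \mapsto -1/(p\tau)$ sends the divisor data $(0, r/p)$ to $(r/p, 0)$ up to the $SL_2(\mathbb{Z})$-action on the first coordinate, which by the transformation law of Klein forms now produces the exponent $\frac12 B_2(r/p)$ in the power of $q$. Summing $\chi(r)\cdot\frac12 B_2(r/p)$ over $r$ and using $\sum_r \chi(r)/p = 0$, $\sum_r \chi(r) = 0$, one gets $\frac12\sum_{r=1}^{p}\chi(r)(r/p)^2 \cdot p = \frac1{2}B_{2,\chi}$ by the very definition of the generalized Bernoulli number quoted above; this is the order of vanishing at $\infty$ already announced. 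The remaining infinite-product part collapses: after the change of variable the factors $(1-q^nq_z^{\pm 1})$ with $q_z = \zeta_p^{\,r}$-type data reassemble, over all $r$ with multiplicity $\chi(r)$ and using $\chi$ periodicity mod $p$, into $\prod_{n\geq 1}(1-q^n)^{\chi(n)}$ — each residue class mod $p$ is hit exactly once in each block of $p$ consecutive $n$, weighted by $\chi(n)$. Hence $\breve u(\tau) = q^{\frac12 B_{2,\chi}}\prod_{n\geq1}(1-q^n)^{\chi(n)}$, with the leading sign and root-of-unity again forced to be $1$ by reality over $K$ and by matching the divisor computed via Mazur's formula $(f) = \frac12 B_{2,\chi}((o)-(\overline o))$.

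The main obstacle I anticipate is bookkeeping of the root-of-unity and sign factors: the Klein form carries the prefactor $-e^{\pi i a_2(a_1-1)}$, and assembling the $\chi(r)$-weighted product over $r$ produces a product of $p$-th roots of unity whose triviality is equivalent to a congruence identity for $\sum_r \chi(r)r \pmod p$ (a classical fact, essentially that $\sum_{r}\chi(r)r \equiv 0$ when $p\equiv 1\pmod 4$, tied to $B_{1,\chi}=0$). Getting this exactly right — rather than merely up to a constant of absolute value $1$ — is what pins down that the constant term is literally $\varepsilon_K^{-h_K}$ and not $\zeta\,\varepsilon_K^{-h_K}$ for some root of unity $\zeta$, and it is also what justifies the final sentence $u = \varepsilon_K^{-h_K}f$ identifying our normalization with the Ogg--Ligozat unit, whose divisor is $\frac12 B_{2,\chi}((o)-(\overline o))$; one checks the divisors of $u$ and $f$ agree, so $u/f$ is a constant, and then the constant-term computation identifies that constant.
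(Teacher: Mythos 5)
Your proposal follows essentially the same route as the paper's proof: expand $u$ via the Klein/Siegel form product formula, identify the constant term with $\varepsilon_K^{-h_K}$ through the class number formula for $K=\QQ(\sqrt p)$ in its cyclotomic form, read off the linear coefficient from a Gauss sum, and compute $\breve u$ from the transformation law of Klein forms under $S$ and $\tau\mapsto p\tau$, with the exponent $\tfrac12 B_{2,\chi}$ coming from $\sum_r\chi(r)B_2(r/p)$. So the architecture is right, but three points need repair. First, $\tfrac12 B_2(0)=\tfrac1{12}\neq 0$, so each individual factor \emph{does} carry a power of $q_\tau$; what removes it from the product is $\sum_{r=1}^{(p-1)/2}\chi(r)=0$ (valid because $p\equiv 1\pmod 4$, so $\chi(-1)=1$), which is also the cancellation that lets one replace the Klein forms by Siegel forms $g_a=\mathfrak{k}_a\eta^2$ — the expansion you quote is that of $g_a$, not of $\mathfrak{k}_a$, and your ``up to an explicit elementary factor'' is hiding exactly this $\eta^{2\chi(r)}$ bookkeeping. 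Second, your formula for the $q$-linear coefficient is wrong: each summand satisfies $\zeta_p^{r}/(1-\zeta_p^{r})+\zeta_p^{-r}/(1-\zeta_p^{-r})=-1$, so your sum equals $-\sum_{r\le (p-1)/2}\chi(r)=0$, not $-\sqrt p$. The correct computation (as in the paper) is that the coefficient of $X$ in $\prod_{r=1}^{p-1}(1-X\zeta_p^r)^{\chi(r)}$ is $-\sum_{r=1}^{p-1}\chi(r)\zeta_p^r=-S_p=-\sqrt p$. Third, your determination of the exponent $-h_K$ by appealing to ``the normalization that $u$ should agree with $\varepsilon_K^{-h_K}f$'' is circular, since that identity is part of what is being proved; the paper instead evaluates the constant exactly as $\prod_{r}(\zeta_p^{-r/2}-\zeta_p^{r/2})^{\chi(r)}=\prod_r\sin(\pi r/p)^{\chi(r)}$ (the powers of $-2i$ cancel by $\sum\chi(r)=0$) and then applies the Borevi\v c--\v Safarevi\v c form of the class number formula, $h_K=-\frac{1}{\log\varepsilon_K}\sum_{0<r<p/2}\chi(r)\log\sin\frac{\pi r}{p}$, which pins down both the sign of the exponent and the absence of any extraneous root of unity. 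With these three points corrected your argument coincides with the paper's.
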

\begin{proof}
Let $\eta(\tau)$ denote Dedekind's eta-function
\begin{equation*}
\eta(\tau)=e^{\pi i\tau/12}\prod_{n=1}^\infty(1 - e^{2\pi i n\tau}).
\end{equation*}
The \textit{Siegel function} $g_a(\tau)=\mathfrak{k}_a(\tau)\eta(\tau)^2$
has a product expansion
\begin{equation*}
g_a(\tau)=-q_\tau^{\frac{1}{2}B_2(a_1)} e^{2\pi i a_2(a_1 - 1)/2} (1 - q_z)
\prod_{n=1}^\infty(1 - q_\tau^n q_z)(1 - q_\tau^n q_z^{-1}),
\end{equation*}
where $B_2(X) = X^2 - X + \frac{1}{6}$
is the second Bernoulli polynomial, and $q_z=e^{2\pi iz}$ with $z\in\CC$.   
So if we let $\zeta_p=e^{2\pi i/p}$, then
\begin{align*}
u(\tau) &= \prod_{r=1}^{\frac{p-1}{2}}g_{(0,r/p)}(\tau)^{\chi(r)} \\
        &=  \prod_{r=1}^{\frac{p-1}{2}}
\left(\zeta_p^{-\frac{r}{2}}(1 - \zeta_p^r)
\prod_{n=1}^\infty(1 - q_\tau^n\zeta_p^r)(1 - q_\tau^n\zeta_p^{-r})
\right)^{\chi(r)} \\
        &= \left(
	         \prod_{r=1}^{\frac{p-1}{2}}
                 \zeta_p^{-\chi(r)\frac{r}{2}}(1 - \zeta_p^r)^{\chi(r)}
           \right)
	   \left(
\prod_{n=1}^\infty\prod_{r=1}^{\frac{p-1}{2}}(1 - q_\tau^n\zeta_p^r)^{\chi(r)}(1 - q_\tau^n\zeta_p^{-r})^{\chi(r)}
           \right) \\
        &= \kappa f(\tau),
\end{align*}
where
\begin{equation*}
\kappa = \prod_{r=1}^{\frac{p-1}{2}}(\zeta_p^{-\frac{r}{2}} - \zeta_p^{\frac{r}{2}})^{\chi(r)}
       = \varepsilon_K^{-h_K}.
\end{equation*}
The last equality follows from the first equation in
Th\'eor\`eme~1 of Borevi\v c and \v Safarevi\v c~\cite[p.~385]{borevitch:nombres}, namely
\begin{equation*}
h_K = -\frac{1}{\log\varepsilon_K}\sum_{\substack{(r,D)=1 \\ 0<r<\frac{D}{2}}}\chi(r)\log\sin \frac{\pi r}{D},
\end{equation*}
specialized to the positive fundamental discriminant $D=p$,
which is a well-known consequence of the formula 
\begin{equation*}
L(1,\chi)=\frac{2h_K}{\sqrt{p}}\log\varepsilon_K.
\end{equation*}
Here $L(s,\chi)$ is the Dirichlet $L$-series attached to $\chi$
\begin{equation*}
L(s,\chi)=\sum_{n=1}^\infty \frac{\chi(n)}{n^s}\quad\Re(s)>0.
\end{equation*}
Therefore $u = \varepsilon_K^{-h_K} f$,
which is the third assertion in our proposition.
To prove  the second assertion note that 
\begin{equation*}
f(\tau)=\prod_{n=1}^\infty\Psi(q^n),
\end{equation*}
where
\begin{align*}
\Psi(X) &=\prod_{r=1}^{\frac{p-1}{2}}(1-X\zeta_p^r)^{\chi(r)}(1-X\zeta_p^{-r})^{\chi(r)}\\
        &=\prod_{r=1}^{\frac{p-1}{2}}(1-X\zeta_p^r)^{\chi(r)}(1-X\zeta_p^{-r})^{\chi(-r)}\\
        &=\prod_{r=1}^{p-1}(1-X\zeta_p^r)^{\chi(r)}\\
	&\equiv 1 - S_pX \pmod{X^2}.
\end{align*}
where $S_p=\sum_{r=1}^{p}\chi(r)\zeta_p^r$ is the Gau\ss\ sum attached to $\chi$.
But we assumed that $p\equiv 1\pmod{4}$, so $S_p=\sqrt{p}$.
Hence
\begin{equation*}
f(\tau)= 1 - \sqrt{p}\,q_\tau + \dots.
\end{equation*}
and the third assertion of our proposition implies that 
\begin{equation*}
u(\tau) =\varepsilon_K^{-h_K}f(\tau)= \varepsilon_K^{-h_K}(1 - \sqrt{p}\,q_\tau + \dots ).
\end{equation*}
To prove the first assertion of our proposition recall that
the Fricke involution $w_p$ of $X_\chi(p)$ is induced by
the M\"{o}bius transformation
\begin{equation*}
\tau\mapsto-\frac{1}{p\tau}
\end{equation*}
acting on
the extended upper-half plane $\mathcal{H}^* =\mathcal{H}\cup\PP^1(\QQ)$,
which is the composition of the M\"{o}bius transformation attached to
\begin{equation*}
S=\mtwo{0}{-1}{1}{0}\in\textrm{SL}_2(\ZZ)
\end{equation*}
with the map $\tau\mapsto p\tau$.
But the basic properties \textbf{K0} and \textbf{K1} of Kubert and Lang~\cite[p. 27]{lang:munits}
imply that for each $\alpha\in\textrm{SL}_2(\ZZ)$ and each $\tau\in\mathcal{H}$ we have
\begin{equation*}
\mathfrak{k}_{a\alpha}(\tau) = (c\tau + d)\mathfrak{k}_a(\alpha\tau).
\end{equation*}
Therefore
\begin{align*}
\breve{u}(\tau) &=  \prod_{r=1}^{\frac{p-1}{2}}\mathfrak{k}_{(-r/p, 0)}(p\tau)^{\chi(r)}\\
                     &=
\prod_{r=1}^{\frac{p-1}{2}}
\left(
q_\tau^{\frac{1}{2}B_2(\frac{r}{p})p}
(1-q_\tau^r)\prod_{n=1}^\infty (1 - q_\tau^{pn + r})(1 - q_\tau^{pn - r})
\right)^{\chi(r)}\\
                     &= q_\tau^{\frac{1}{2}B_{2,\chi}}
\prod_{r=1}^{\frac{p-1}{2}}(1-q_\tau^r)^{\chi(r)}
\prod_{r=1}^{\frac{p-1}{2}}
\prod_{n=1}^\infty (1-q_\tau^{pn + r})^{\chi(r)}(1-q_\tau^{pn - r})^{\chi(-r)}\\
                     &= q_\tau^{\frac{1}{2}B_{2,\chi}}\prod_{n=1}^\infty(1-q_\tau^n)^{\chi(n)},
\end{align*}
which is the first assertion of our proposition.
\end{proof}
For $p=5$ we have $\frac{1}{2}B_{2,\chi} = \frac{1}{5}$
and we may see from the above proposition that in this case $u$
is not an element of the function field of the curve $X_\chi(p)$,
but $u^5$ is in fact a Hauptmodul for $X_\chi(p)$.
Moreover, in this notation the Rogers-Ramanujan continued fraction becomes
\begin{equation}\label{eqn:rr}
\breve{u}(\tau)=\cfrac{q^{1/5}}{1+\cfrac{q}{1+\cfrac{q^2}{1+\cfrac{q^3}{1+\ddots}}}}.
\end{equation}

\section{A limit formula}\label{sec:limit}
For each $n\in\ZZ_{\geq 0}$ let $p_A(n)$ denote the number of partitions
\begin{equation*}
n=\lambda_1 + \dots +\lambda_r
\end{equation*}
with parts $\lambda_i\in A$
such that $\lambda_1\leq\lambda_2\leq\dots\leq\lambda_r$,
for $i = 1,2,\dots,r$.
In particular,
consider the set of quadratic residues $S_+$ and the set of quadratic non-residues  $S_-$
modulo $p$,
\begin{equation*}
S_\pm = \left\{m\in\ZZ_{> 0}: \chi(m) = \pm 1\right\},
\end{equation*}
so that $p_\pm(n)=p_{S_\pm}(n)$.
Here as before, $\chi=(\frac{\cdot}{p})$ is the Legendre character attached to $p$.
\begin{proposition}\label{prop:schur}
As before consider a prime $p\equiv 1\pmod{4}$.
We have the limit
\begin{equation*}
\lim_{t\to 0^+}\frac{\sum_{n=0}^\infty p_+(n)e^{-2\pi nt}}{\sum_{n=0}^\infty p_-(n)e^{-2\pi nt}} = \varepsilon_K^{h_K}.
\end{equation*}
\end{proposition}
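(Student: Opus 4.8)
The plan is to convert the ratio on the left into a single infinite product and then read off its behaviour as $t\to0^+$ from the two Fourier expansions of Proposition~\ref{prop}. Write $q=e^{-2\pi t}$ with $t>0$. Since $p_\pm(n)=p_{S_\pm}(n)$ counts partitions all of whose parts lie in $S_\pm$, the standard generating-function identity for partitions with parts in a prescribed set gives
\[
\sum_{n=0}^\infty p_\pm(n)e^{-2\pi nt}=\prod_{\substack{m\ge 1\\ \chi(m)=\pm 1}}\frac{1}{1-q^m},
\]
each product converging absolutely for $0<q<1$. Dividing, the factors with $p\mid m$ are absent from both and the surviving exponents collapse to $-\chi(m)$, so
\[
\frac{\sum_{n=0}^\infty p_+(n)e^{-2\pi nt}}{\sum_{n=0}^\infty p_-(n)e^{-2\pi nt}}=\prod_{m=1}^\infty(1-q^m)^{-\chi(m)}.
\]

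I would then recognize this product from the first expansion of Proposition~\ref{prop}: with $\tau=it$, $q_\tau=q$, that expansion reads $\breve u(it)=q^{\frac12 B_{2,\chi}}\prod_{m\ge1}(1-q^m)^{\chi(m)}$, hence the ratio above equals $q^{\frac12 B_{2,\chi}}/\breve u(it)=e^{-\pi tB_{2,\chi}}/\breve u(it)$. Since the numerator tends to $1$ as $t\to0^+$, everything comes down to $\lim_{t\to0^+}\breve u(it)$, and this is where the Fricke involution is used: from $\breve u=u\circ w_p$ and the fact that $w_p$ is induced by $\tau\mapsto-1/(p\tau)$ we get $\breve u(it)=u\bigl(-1/(p\,it)\bigr)=u\bigl(i/(pt)\bigr)$, whose imaginary part $1/(pt)$ tends to $\infty$ as $t\to0^+$. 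By the second expansion of Proposition~\ref{prop} — equivalently, by the convergent product $u=\varepsilon_K^{-h_K}\prod_{n\ge1}\Psi(q_\tau^n)$ exhibited in its proof — we have $u(\tau)\to\varepsilon_K^{-h_K}$ as $\Im\tau\to\infty$. Therefore $\breve u(it)\to\varepsilon_K^{-h_K}$, and
\[
\lim_{t\to0^+}\frac{\sum_{n=0}^\infty p_+(n)e^{-2\pi nt}}{\sum_{n=0}^\infty p_-(n)e^{-2\pi nt}}=\frac{1}{\varepsilon_K^{-h_K}}=\varepsilon_K^{h_K}.
\]

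I do not expect a genuine obstacle here; the delicate-looking steps are in fact routine, and only two bookkeeping remarks are needed. First, for $0<q<1$ one has $q^{\frac12 B_{2,\chi}}>0$ and $\prod_{m\ge1}(1-q^m)^{\chi(m)}>0$ (a convergent product of positive terms), so $\breve u(it)>0$, the quotient $e^{-\pi tB_{2,\chi}}/\breve u(it)$ is well defined for all $t>0$, and the limit is taken along positive reals only. Second, the step $u\bigl(i/(pt)\bigr)\to\varepsilon_K^{-h_K}$ is just the continuity of $\prod_{n\ge1}\Psi(q_\tau^n)$ up to $q_\tau=0$, where its value is $\prod_{n\ge1}\Psi(0)=1$; this follows from the locally uniform convergence of that product on $\{|q_\tau|<1\}$. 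As a sanity check, for $p=5$ this is precisely Schur's evaluation of the Rogers--Ramanujan continued fraction~(\ref{eqn:rr}) as $q\to1^-$: there $K=\QQ(\sqrt5)$, $h_K=1$, $\varepsilon_K=\tfrac{1+\sqrt5}{2}$, so $\breve u(it)\to\varepsilon_K^{-1}=\tfrac{\sqrt5-1}{2}$.
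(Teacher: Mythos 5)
Your proof is correct and follows essentially the same route as the paper: rewrite the ratio of generating functions as $q^{\frac12 B_{2,\chi}}/\breve u(it)$ via the first expansion in Proposition~\ref{prop}, then use the Fricke involution and the second expansion to get $\breve u(it)\to\varepsilon_K^{-h_K}$ as $t\to0^+$. The extra bookkeeping you supply (positivity, the harmless factor $e^{-\pi tB_{2,\chi}}\to1$) is exactly what the paper leaves implicit.
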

\begin{proof}
From the first part of Proposition~\ref{prop} we have
\begin{equation*}
\frac{1}{\breve{u}} =
q^{-\frac{1}{2}B_{2,\chi}}\frac{\prod_{m\in S_+}\frac{1}{1-q^m}}{\prod_{m\in S_-}\frac{1}{1-q^m}} =
q^{-\frac{1}{2}B_{2,\chi}}\frac{\sum_{n=0}^\infty p_+(n)q^n}{\sum_{n=0}^\infty p_-(n)q^n}.
\end{equation*}
So the second part of Proposition~\ref{prop} yields
\begin{equation*}
\lim_{t\to 0^+}\breve{u}(it) =\lim_{t\to 0^+}u(w_p(it)) = \lim_{t\to\infty}u(it) = \varepsilon_K^{-h_K}
\end{equation*}
and the proposition follows.
\end{proof}
The above proposition is a generalization of a limit formula due to
Schur\cite[p.~321]{schur:1917} for $p=5$. It may be regarded as
a consequence of the Rogers-Ramanujan continued fraction, since
the right-hand side of Eq.~(\ref{eqn:rr}) tends to
\begin{equation*}
\cfrac{1}{1+\cfrac{1}{1+\cfrac{1}{1+\cfrac{1}{1+\ddots}}} }= \frac{-1+\sqrt{5}}{2}
\end{equation*}
as $t\to 0$, and we know that $\varepsilon_K^{-1}=\frac{-1+\sqrt{5}}{2}$ and that $h_K=1$
for the real quadratic field $K=\QQ(\sqrt{5})$.
\begin{remark}
From the first two terms of the Fourier expansion of $u(\tau)$
we may see that the real-analytic function $h:\RR_{> 0}\longrightarrow\RR_{> 0}$ defined by $t\mapsto \breve{u}(it)$
is a monotone concave function in a neighbourhood of $t=0$,
as depicted in Figure~\ref{cap:u} for $p=13$.
Moreover, the function $h$ is log-concave on $\RR_{> 0}$.
This is an easy consequence of the fact that the logarithmic derivative of $u(\tau)$ is,
up to a positive scalar multiple,
the well-known Eisenstein series $G_{2,\chi}(\tau)$
of weight $2$ attached to the character $\chi$.
(Cf. Lang~\cite[p. 250]{lang:modular}.)
\begin{figure}
\begin{center}
\includegraphics{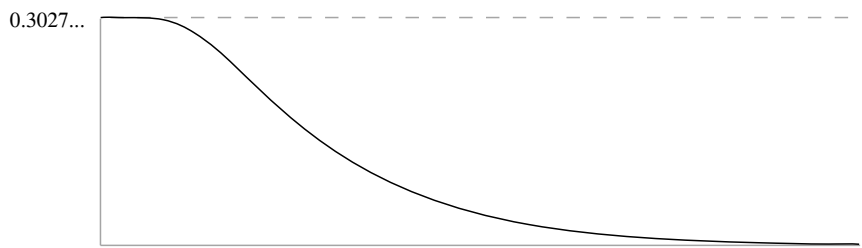}%
\end{center}
\caption{Graph of $t\mapsto\breve{u}(it)$ near $t=0$ for $p=13$.}\label{cap:u}
\end{figure}
\end{remark}
Fix $k\in\ZZ$ and define $p^{(k)}(n)=p_A^{(k)}(n)$ by the formal power series equality
\begin{equation*}
\sum_{n=0}^\infty p^{(k)}(n)X^n = (1-X)^k\prod_{a\in A}\frac{1}{1-X^a}
\end{equation*}
Following Bateman and Erd\H{o}s~\cite{erdos:bateman},
we say that a subset $A\subset\ZZ_{>0}$ satisfies property $P_k$
if $|A|>k$ and if $\textrm{gcd}(A\setminus S)=1$,
for each $S\subset A$ such that $|S|=k$.
Note that $p^{(k)}(n)$ is the $k$-th difference of $p(n)$ if $k>0$,
the $-k$-th order summatory function of $p(n)$, and $p^{(0)}(n)=p(n)$.
\begin{lemma}\label{lem:ccb}
If $A$ is an infinite subset of $\ZZ_{>0}$ such that $\textrm{gcd}(A)=1$,
then for each positive integer $h$ we have the limit
\begin{equation*}
\lim_{n\to \infty}\frac{p(0)+\dots+p(n + h)}{p(0)+\dots+p(n)} = 1.
\end{equation*}
\end{lemma}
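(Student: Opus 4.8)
The plan is to derive the general statement from the case $h=1$ and to recast that case as an $o$-estimate for a difference of a partition function. Write $S(n)=p(0)+\dots+p(n)$, where $p=p_A$. Since $S$ is non-decreasing,
\begin{equation*}
\frac{S(n+h)}{S(n)}=\prod_{i=1}^{h}\frac{S(n+i)}{S(n+i-1)},
\end{equation*}
a product of a fixed number of factors, so the limit is $1$ as soon as each factor tends to $1$; hence it suffices to treat $h=1$. There $S(n)/S(n-1)=1+p(n)/S(n-1)$, and since $S(n)=S(n-1)+p(n)$ with $S(n-1)\ge 1$ this is readily seen to be equivalent to
\begin{equation*}
\frac{p(n)}{S(n)}\longrightarrow 0\qquad (n\to\infty).
\end{equation*}

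To prove this, assume first that $1\notin A$. Comparing generating functions,
\begin{equation*}
\sum_{n\ge 0}S(n)X^{n}=\frac{1}{1-X}\prod_{a\in A}\frac{1}{1-X^{a}}=\prod_{a\in A\cup\{1\}}\frac{1}{1-X^{a}},
\end{equation*}
so $S(n)=p_{A\cup\{1\}}(n)$ and $p(n)=S(n)-S(n-1)=p^{(1)}_{A\cup\{1\}}(n)$, the first difference of the partition function of $B:=A\cup\{1\}$. Moreover $B$ has property $P_{1}$: removing the element $1$ leaves $A$, with $\gcd(A)=1$, while removing any $a\in A$ leaves a set still containing $1$, hence with $\gcd=1$; and $|B|>1$. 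Thus $B$ is covered by the Bateman--Erd\H{o}s monotonicity analysis, which in its sharp form gives $p^{(1)}_{B}(n)=o(p_{B}(n))$, equivalently $p_{B}(n-1)/p_{B}(n)\to 1$; that is exactly $p(n)=o(S(n))$. The case $1\in A$ reduces to this one: there $p(n)$ and $S(n)$ are the first and second summatory functions of $p_{A\setminus\{1\}}$, so by the elementary remark that $b_{n-1}/b_{n}\to 1$ (for $b_{n}$ eventually positive) forces $b_{n}=o(b_{0}+\dots+b_{n})$, it is enough to know $p_{A\setminus\{1\}}(m)=o\bigl(\sum_{j\le m}p_{A\setminus\{1\}}(j)\bigr)$; writing $d=\gcd(A\setminus\{1\})$ and dividing through by $d$ turns this into an instance of the same statement with $\gcd=1$, settled by the preceding paragraph once $1$ is absent from the set and iterated otherwise.

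The step I expect to be the real obstacle is the estimate $p(n)=o(S(n))$ itself, i.e. the sharp ($o$-)form of the Bateman--Erd\H{o}s monotonicity theorem: the bare theorem only asserts that the differences $p^{(k)}$ are eventually non-negative (and indeed tend to $+\infty$) precisely when property $P_{k}$ holds, whereas here one needs the sharper statement that then $p^{(k)}(n)=o(p^{(k-1)}(n))$, i.e. that consecutive values of $p^{(k-1)}$ have ratio tending to $1$. A natural route is to combine the monotonicity of $p^{(k)}$ along the residue classes modulo a fixed element $a\in A$, which gives $p^{(k)}(n)\ge p^{(k)}(n-a)$, with the sub-exponential bound $p_{A}(n)\le p(n)=e^{O(\sqrt n)}$; but carrying this out carefully --- and, in the case $1\in A$, controlling the iteration in which dividing out $\gcd(A\setminus\{1\})$ may re-introduce the part $1$ --- is where the work lies.
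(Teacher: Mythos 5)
Your reduction is sound and is essentially the route the paper takes: everything comes down to the single estimate $p(n)=o(S(n))$ with $S(n)=p(0)+\dots+p(n)$, and your telescoping of $S(n+h)/S(n)$ into $h$ factors each tending to $1$ is a clean substitute for the paper's appeal to the corollary after Theorem~6 of Bateman--Erd\H{o}s (which it uses to bound $p(n+1)+\dots+p(n+h)$ against $p(n)$ before dividing by $S(n)$). The step you flag as ``the real obstacle,'' however, is not an obstacle: the sharp $o$-form you need is precisely Theorem~5 of Bateman--Erd\H{o}s, which asserts that if $A$ is infinite and has property $P_k$ then $p^{(k+1)}(n)/p^{(k)}(n)\to 0$. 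Your worry that their paper ``only asserts eventual non-negativity'' of the differences is a misreading --- the ratio statement is the content of their Theorem~5, the eventual monotonicity statements are separate --- and the paper's proof of the lemma simply cites it. In fact the paper applies it with $k=-1$, for which property $P_{-1}$ is vacuous, obtaining $p^{(0)}(n)/p^{(-1)}(n)=p(n)/S(n)\to 0$ directly for any infinite $A$; this bypasses your detour through $A\cup\{1\}$ and property $P_1$ entirely.

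The one genuinely shaky spot in your write-up is the case $1\in A$: the recursion ``remove $1$, divide out the gcd, iterate'' needs an argument that it terminates, which you do not supply (it does terminate, since the second-smallest element is at least halved at every step in which the iteration continues, but this should be said). It is also unnecessary: since $\gcd(A)=1$, the set $A$ itself has property $P_0$, so Theorem~5 with $k=0$ gives $p(n-1)/p(n)\to 1$, and your own ``elementary remark'' that $b_{n-1}/b_n\to 1$ forces $b_n=o(b_0+\dots+b_n)$ then yields $p(n)=o(S(n))$ with no case distinction at all. With either of these repairs --- citing Theorem~5 of Bateman--Erd\H{o}s rather than attempting to reprove it, and applying it to $A$ directly --- your argument closes and coincides in substance with the paper's.
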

\begin{proof}
From Bateman and Erd\H{o}s~\cite[p. 10]{erdos:bateman}, the  corollary after Theorem 6
says that for each positive integer $h$ we have
\begin{equation*}
\frac{p^{(k-1)}(n+h)-p^{(k-1)}(n)}{h} = \left(1 + o(1)\right)p^{(k)}(n).
\end{equation*}
For $k=0$ the assumption $\textrm{gcd}(A)=1$ yields
\begin{equation}\label{eqn:smallo}
\frac{p(n+1)+ \dots + p(n+h)}{p(n)} = \left(1 + o(1)\right)h,
\end{equation}
for each positive integer $h$.
But from Theorem 5 of Bateman and Erd\H{o}s~\cite[p. 7]{erdos:bateman} we know that
\begin{equation*}
\lim_{n\to \infty}\frac{p^{(k+1)}(n)}{p^{(k)}(n)} = 0,
\end{equation*}
provided $A$ is infinite and such that property $P_k$ holds.
In particular, for $k=-1$ we see that property $P_k$ is trivially satisfied
and we thus have the limit
\begin{equation*}
\lim_{n\to \infty}\frac{p(n)}{p(0) + \dots + p(n)} = 0
\end{equation*}
This limit together with Eq.~(\ref{eqn:smallo}) yield
\begin{equation*}
\lim_{n\to \infty}\frac{p(n+1)+ \dots + p(n+h)}{p(0) + \dots + p(n)} = 0,
\end{equation*}
which gives
\begin{equation*}
\lim_{n\to \infty}\frac{p(0)+\dots+p(n + h)}{p(0)+\dots+p(n)} = 
1 + \lim_{n\to \infty}\frac{p(n+1)+ \dots + p(n+h)}{p(0) + \dots + p(n)} = 1
\end{equation*}
and the proposition follows.
\end{proof}
Now we shall prove Theorem~\ref{thm}.
From the appendix, for all large enough $n$ we have
\begin{equation}\label{ineq}
p_-(n) < p_+(n).
\end{equation}
But Lemma~\ref{lem:ccb} yields
\begin{equation*}
\frac{\sum_{m=0}^\mu p_+(m)}{\sum_{n=0}^\nu p_+(n)}\to 1
\quad\textrm{as}\quad \mu, \nu\to\infty \quad\textrm{with}\quad \frac{\mu}{\nu}\to 1.
\end{equation*}
Hence the limit formula of Proposition~\ref{prop:schur}
satisfies all the hypothesis
of Theorem 3.2 of Sato~\cite[p.~85]{sato:ratio}
and Theorem~\ref{thm} follows.
\begin{remark}
If we replace Inequality~\ref{ineq} by the weaker condition
\begin{equation*}
p_-(n) = O( p_+(n)),
\end{equation*}
then Sato's ratio Tauberian theorem still applies here.
\end{remark}

\section{A conjecture and some open questions}\label{sec:conj}
Given $k\in\ZZ$, let $p^{(k)}(n)$ be as in Section~\ref{sec:limit} and
for each $n\in\ZZ_{\geq 0}$ define
\begin{equation*}
\rho^{(k)}(n)=\frac{p^{(k+1)}(n)}{p^{(k)}(n)}=\frac{p^{(k)}(n)-p^{(k)}(n-1)}{p^{(k)}(n)}.
\end{equation*}
Note that the corollary of Theorem 5 of Bateman and Erd\H{o}s~\cite[p. 9]{erdos:bateman}
says that if $A$ has property $P_k$ then
\begin{equation*}
p^{(k)}(n)\to \infty
\end{equation*}
and
\begin{equation*}
\rho^{(k)}(n)\to 0,
\end{equation*}
as $n\to\infty$.
They also show that if $A$ satisfies property $P_{k+1}$,
then $p^{(k)}(n)$ is eventually strictly increasing.
But the question of the monotonicity of $\rho^{(k)}(n)$
has not been raised before.
This is an interesting question,
as the eventual monotonicity of $\rho^{(k)}(n)$
is a natural generalization of the log-concavity of $p(n)$
for all large enough $n$.
Indeed, we have
\begin{equation*}
\rho^{(k)}(n)>\rho^{(k)}(n+1)
\end{equation*}
if and only if 
\begin{equation*}
 0< p^{(k)}(n)^2-p^{(k)}(n-1)p^{(k)}(n+1).
\end{equation*}
This monotonicity question has been settled for the case $A=\ZZ_{>0}$ and $k=0$ by
DeSalvo and Pak~\cite{desalvo:pak}, as they proved that the classical partition function $p(n)$
is log-concave for all $n>25$.
Moreover,
we may also see that
the monotonicity of $\rho^{(-1)}(n)$ for all large enough $n$ is equivalent to having
the sequence
\begin{equation}\label{eqn:pal}
\left\{\frac{1}{p(\nu)}\sum_{n=0}^\nu p(n)\right\}_{\nu=0}^\infty
\end{equation}
eventually strictly increasing.
Eq.~(\ref{eqn:pal}) is the key condition of the converse of Stolz-Ces\`aro theorem
due to P\u{a}lt\u{a}nea~\cite{paltanea:criterion} which
(together with Theorem~\ref{thm}) yields Petersson's partition limit formula.
Considering the Tauberian condition $T_2$
of the conjecture due to Grosswald~\cite[pp. 55-56]{grosswald:elementary},
we propose the following.
\begin{conjecture}\label{conj:ccb}
If $A\subset\ZZ_{>0}$ is such that $P_{k+1}, P_{k+2},\dots$, then $\rho^{(k)}(n)$
is eventually strictly decreasing.
\end{conjecture}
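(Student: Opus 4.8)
We outline a possible approach to Conjecture~\ref{conj:ccb}. First one recasts the conclusion as an eventual log-concavity statement for $p^{(k)}(n)$. Since $P_{k+1}$ holds (hence also $P_{k}$), Bateman and Erd\H{o}s~\cite{erdos:bateman} give $p^{(k)}(n)\to\infty$ with $p^{(k)}(n)$ eventually strictly increasing, so $p^{(k)}(n)>0$ for all large $n$ and $\rho^{(k)}(n)$ is well defined there. As observed in the text, $\rho^{(k)}(n)>\rho^{(k)}(n+1)$ is equivalent to $p^{(k)}(n)^{2}>p^{(k)}(n-1)p^{(k)}(n+1)$, so the conjecture asserts precisely that $\bigl(p^{(k)}(n)\bigr)_{n}$ is eventually log-concave. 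Rewriting $p^{(k)}(n\pm1)$ through $p^{(k+1)}$ and $p^{(k+2)}$ gives the equivalent identity
\begin{equation*}
p^{(k)}(n)^{2}-p^{(k)}(n-1)p^{(k)}(n+1)=p^{(k+1)}(n)\,p^{(k+1)}(n+1)-p^{(k)}(n)\,p^{(k+2)}(n+1),
\end{equation*}
so the inequality to be proved is $\rho^{(k)}(n)>\rho^{(k+1)}(n+1)$ for all large $n$. This does not bootstrap into an induction on $k$: by Meinardus~\cite{Mein} both $\rho^{(k)}(n)$ and $\rho^{(k+1)}(n)$ have the \emph{same} leading term $\sim c\,n^{-1/(\alpha+1)}$, so the sign of the difference is governed by lower-order terms. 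The model to imitate is the theorem of DeSalvo and Pak~\cite{desalvo:pak}, namely the case $A=\ZZ_{>0}$, $k=0$, whose proof is analytic.

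The plan is to run a Meinardus-type saddle-point analysis for $p^{(k)}(n)$ and then difference the resulting asymptotic twice. Suppose --- as one must, and as holds for $A=S_{\pm}$ --- that $D_{A}(s)=\sum_{a\in A}a^{-s}$ extends meromorphically with a simple pole at some $\alpha>0$ and satisfies the growth hypotheses of Meinardus. The generating function of $p^{(k)}(n)$ is $(1-X)^{k}\prod_{a\in A}(1-X^{a})^{-1}$, and the factor $(1-X)^{k}$ only adds $k\log(1-X)$ to the exponent; at the saddle point $X=e^{-\tau}$, $\tau\to0^{+}$, this affects only the polynomial prefactor and not the dominant exponential, which comes from $\sum_{a\in A}\log(1-X^{a})^{-1}\sim\Gamma(\alpha)\zeta(\alpha+1)\bigl(\operatorname{Res}_{s=\alpha}D_{A}(s)\bigr)\tau^{-\alpha}$. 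The goal is an expansion
\begin{equation*}
\log p^{(k)}(n)=c_{1}n^{\theta}+c_{2}\log n+c_{3}+R_{k}(n),\qquad\theta=\tfrac{\alpha}{\alpha+1}\in(0,1),\qquad c_{1}>0,
\end{equation*}
in which $R_{k}(n)\to0$ and, crucially, the second difference $R_{k}(n-1)+R_{k}(n+1)-2R_{k}(n)$ is $o(n^{\theta-2})$.

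Granting this, one differences twice. The main term gives $c_{1}\bigl[(n-1)^{\theta}+(n+1)^{\theta}-2n^{\theta}\bigr]=c_{1}\theta(\theta-1)n^{\theta-2}\bigl(1+o(1)\bigr)$, strictly negative for all large $n$ since $0<\theta<1$ and $c_{1}>0$; the $\log n$ and constant terms give a second difference of size $O(n^{-2})=o(n^{\theta-2})$; and $R_{k}$ gives $o(n^{\theta-2})$ by construction. Summing, $\log p^{(k)}(n-1)+\log p^{(k)}(n+1)-2\log p^{(k)}(n)<0$ for all large $n$, i.e.\ $p^{(k)}(n)^{2}>p^{(k)}(n-1)p^{(k)}(n+1)$, which is the asserted eventual strict decrease of $\rho^{(k)}(n)$.

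The hard part is supplying the two inputs just used. The conditions $P_{k+1},P_{k+2},\dots$ are statements about greatest common divisors of cofinite subsets of $A$; through Bateman and Erd\H{o}s they yield the positivity and monotonicity used in the first step, but they carry \emph{no} information about the analytic nature of $D_{A}(s)$ --- for instance when $A$ is the set of all primes every $P_{j}$ holds, yet $D_{A}(s)$ has a logarithmic rather than a polar singularity and Meinardus' method does not apply. Moreover, even granting meromorphic continuation with a simple pole, the standard Meinardus output is only $p^{(k)}(n)=Cn^{\kappa}e^{(\mathrm{main})}\bigl(1+O(n^{-\kappa_{1}})\bigr)$, and a bare error bound will not do: the second difference of such an error need only be $O(n^{-\kappa_{1}})$, which is $o(n^{\theta-2})$ only when $\kappa_{1}>2-\theta$, and one must also exclude oscillation. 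What is really needed is a secondary term with a smooth remainder, which forces $D_{A}(s)$ to continue a little past $\Re(s)=0$ with polynomial growth on vertical lines. I therefore expect the conjecture in full generality to require either such an extra analytic hypothesis on $D_{A}(s)$ or a genuinely new combinatorial idea; but for the arithmetically defined sets of interest --- in particular $A=S_{\pm}$, where $D_{A}(s)$ is a finite combination of Dirichlet $L$-functions and hence meromorphic on $\CC$ with good vertical growth --- the programme should go through, and together with Theorem~\ref{thm} and P\u{a}lt\u{a}nea's criterion~\cite{paltanea:criterion} it would then yield Petersson's partition limit formula~\eqref{eqn:petersson}.
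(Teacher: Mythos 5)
This statement is a \emph{conjecture}: the paper offers no proof of it, only numerical evidence computed with PARI/GP for $A=S_{\pm}$ with $p\le 1987$ and for $A=\ZZ_{>0}$, so there is no argument of the authors' to compare yours against. What you have written is, by your own admission, a programme rather than a proof, and it should be judged as such.

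The parts of your outline that are verifiable are correct. The algebraic identity $p^{(k)}(n)^{2}-p^{(k)}(n-1)p^{(k)}(n+1)=p^{(k+1)}(n)\,p^{(k+1)}(n+1)-p^{(k)}(n)\,p^{(k+2)}(n+1)$ checks out (writing $a,b,c$ for the three consecutive values, both sides equal $b^{2}-ac$), and the reduction of the conjecture to eventual log-concavity of $p^{(k)}(n)$ is exactly the reformulation the paper itself records in Section~\ref{sec:conj}. Your second-difference computation of the main term $c_{1}n^{\theta}$ with $0<\theta<1$ is also right. But the proposal does not close, and you have correctly located the two reasons why. First, the hypotheses $P_{k+1},P_{k+2},\dots$ are purely about gcd's of cofinite subsets of $A$ and impose no analytic structure on $D_{A}(s)$; your example of the primes (all $P_{j}$ hold, yet $D_{A}(s)$ has a logarithmic singularity) shows that no Meinardus-type argument can prove the conjecture as stated. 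Second, even for arithmetically nice $A$ such as $S_{\pm}$, Meinardus' theorem delivers only a relative error $O(n^{-\kappa_{1}})$, and a bound on an error term says nothing about the sign or size of its second difference; this is precisely why DeSalvo and Pak needed Rademacher-type expansions with explicit, monotonically controlled remainders even in the classical case $A=\ZZ_{>0}$, $k=0$. So what you have is a plausible route to the special case $A=S_{\pm}$ (which is the case needed, together with P\u{a}lt\u{a}nea's criterion and Theorem~\ref{thm}, to recover Petersson's formula), contingent on establishing a two-term asymptotic with a smooth remainder --- a substantial piece of analytic work that is not carried out here --- and a correct explanation of why the general conjecture must lie deeper. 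That is a fair assessment of the state of the problem, but it is not a proof of the conjecture.
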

With the help of PARI/GP~\cite{pari:gp}
we obtained strong numerical evidence supporting Conjecture~\ref{conj:ccb}
for $p(n)=p_\pm(n)$ in the range $p\leq 1987$,
with $|k|\leq 5$ and $n\leq 10000$
and also for the classical partition function $p(n)$,
with $|k|\leq 10$ and $n\leq 100000$.

As described by Iwasawa~\cite[p. 61]{iwasawa:lectures},
there is a remarkable non-archimedean analogue of
\begin{equation*}
L(1,\chi)=-\frac{S_p}{p}\sum_{r=1}^{p-1}\chi(r)\log|1-\zeta_p^r|
\end{equation*}
known as Leopoldt's formula.
Moreover,
Siegel functions have natural rigid-analytic avatars.
It seems to be an interesting open question whether there are
analogues of Proposition~\ref{prop:schur}
(which is a generalization of a limit formula due to Schur)
and of Petersson's limit partition formula
within this realm.

\section*{Acknowledgments}

I am very grateful to Professor Luca for his generous help 
explaining me Meinardus' method and for writing the appendix.
He worked on this paper while visiting
the Max Planck Institute for Mathematics Bonn in 2019 and
the Max Planck Institute for Software Systems in Saarbr\"ucken in 2020.
Professor Luca thanks these institutions for financial help and excellent working conditions. 

I would also like to heartily thank Professor Rolen
for drawing my attention to the work of Grosswald
and for his encouraging comments.

\appendix

\section{By Florian Luca}

Here, we show how equation \eqref{eqn:petersson} follows from Meinardus' scheme \cite{Mein} (see also \cite{And}).  

Let us recall Meinardus' scheme. Let ${\mathcal A}\subseteq {\mathbb N}$ be a set of positive integers. Put
\begin{eqnarray*}
p_{\mathcal A}(n) & = & \#\{(\lambda_1,\ldots,\lambda_k): \lambda_1\ge \lambda_2\ge \cdots\ge \lambda_k\ge 1,~\lambda_1+\cdots+\lambda_k=n,\\
& & \lambda_i\in {\mathcal A}, i=1,\ldots,k\}
\end{eqnarray*}
for the number of partitions of $n$ with parts from ${\mathcal A}$. Writing $\{a_n\}_{n\ge 1}$ for the characteristic function of ${\mathcal A}$; that is, $a_n=1$ if $n\in {\mathcal A}$ and $a_n=0$ otherwise, the generating function of $p_{\mathcal A}$ is 
$$
\prod_{n\ge 1} (1-e^{-n\tau} )^{-a_n}=1+\sum_{n\ge 1} p_{\mathcal A}(n) e^{-n\tau},\quad  {\text{\rm with}}\quad {\text{\rm Re}}(\tau)>0.
$$
Meinardus, in his 1954 paper \cite{Mein}, makes the following assumptions:
\begin{itemize}
\item[(i)] Let
$$
D(s)=\sum_{n=1}^{\infty} a_n n^{-s},\quad {\text{\rm where}}\quad s=\sigma+it.
$$
Assume that $D(s)$ is convergent for $\sigma>\alpha>0$. Assume further that $D(s)$ can be analytically continued up to $\sigma=-c_0$, where $0<c_0<1$. Assume that for $\sigma\ge -c_0$, $D(s)$ is holomorphic except for 
$s=\alpha$ where it has a pole of order $1$ with residue $A$. Assume further that in this region, we have 
$$
D(s)=O(|t|^{c_1})
$$
as $t\to\infty$ for some $c_1>0$. 

\item[(ii)] For $\tau=y+2\pi i x$ with $y>0$ put
$$
g(\tau)=\sum_{n\ge 0} a_n e^{-n\tau}.
$$ 
Assume that for $|{\text{\rm arg}}( \tau)|>\pi/4$, $|x|\le 1/2$, one has
$$
{\text{\rm Re}}(g(\tau)-g(y))\le -c_2y^{-\varepsilon}
$$
for $y$ sufficiently small, where $c_2>0$ and $\epsilon>0$ are some positive real numbers. 
\end{itemize}

Under (i) and (ii), Meinardus proves that 
\begin{equation}
\label{eq:2}
p_{\mathcal A}(n)=C n^{\chi} e^{n^{\frac{\alpha}{\alpha+1}}\left(1+\frac{1}{\alpha}\right)(A\Gamma(\alpha+1)\zeta(\alpha+1))^{\frac{1}{\alpha+1}}}(1+O(n^{-\chi_1}))\quad {\text{\rm as}}\quad n\to\infty,
\end{equation}
where
\begin{eqnarray*}
C & = & e^{D'(0)} (2\pi(1+\alpha))^{-1/2}(A\Gamma(\alpha+1)\zeta(\alpha+1))^{\frac{1-2D(0)}{2(1+\alpha)}};\\
\chi & = & \frac{2D(0)-2-\alpha}{2(1+\alpha)}.
\end{eqnarray*}
He also gives some estimates for $\chi_1$ which we don't need. Well, let us apply it to our case. For us, $a_n=\chi(n)$ in the case of $p_{+}$ and $a_n=-\chi(n)$ in the case of $p_{-}$, where $\chi(n)={\displaystyle{\left(\frac{n}{p}\right)}}$ is the Legendre character modulo $p$. Putting again 
$$
L(s,\chi)=\sum_{n\ge 1} \frac{\chi(n)}{n^{s}},
$$
one sees easily that
$$
D_{+}(s)=\sum_{\substack{n\ge 1\\ \chi(n)=1}} n^{-s}=\frac{1}{2}\sum_{\substack{n\ge 1\\ p\nmid n}} \frac{1+\chi(n)}{n^s}=\frac{1}{2}\left(\zeta(s)(1-p^{-s})+L(s,\chi)\right),
$$
and similarly
$$
D_{-}(s)=\frac{1}{2}\left(\zeta(s)(1-p^{-s})-L(s,\chi)\right).
$$
So, we see that hypothesis (i) of Meinardus' scheme is fulfilled for both $D_{+}(s)$ and $D_{-}(s)$ with $\alpha=1$, $A=(1-p^{-1})$ since for $\sigma>-1/2$, 
$\zeta(s)$ is holomorphic except for a single pole at $s=1$ with residue $1$ and $L(s,\chi)$ is holomorphic. Condition (ii) is also fulfilled by standard results about vertical growth of $\zeta(s)$ and $L(s,\chi)$. Furthermore, since $\zeta(0)=-1/12$, and 
$L(0,\chi)=0$ (because $p\equiv 1\pmod 4$), we get that 
$$
D_{+}(0)=\frac{1}{2}\left((-1/12)(1-p^{-0})+L(0,\chi)\right)=0,
$$
and similarly $D_{-}(0)=0$. So, the ``main" terms of $p_{+}(n)$ and $p_{-}(n)$ in \eqref{eq:2} coincide up to the constants $C_{+}$ and $C_{-}$, that is
\begin{equation}
\label{eq:pete1}
\frac{p_{+}(n)}{p_{-}(n)}=(1+o(1))\frac{C_{+}}{C_{-}}=(1+o(1)) e^{D_{+}(0)'-D_{-}(0)'} \qquad {\text{\rm as}}\qquad n\to\infty,
\end{equation}
where $C_{+}=D_{+}'(0)$ and $C_{-}=D_{-}(0)'$.  Now
$$
D_{+}(s)'=\frac{1}{2}\left(\zeta'(s)(1-p^{-s})+\zeta(s)(\log p) p^{-s}+L'(s,\chi)\right).
$$
Evaluating in $s=0$, we get 
$$
D_{+}(0)'=\frac{1}{2}\zeta(0)\log p+\frac{1}{2}L'(0,\chi)=-\frac{\log p}{24}+\frac{1}{2}L(0,\chi)'.
$$
A similar argument shows that 
$$
D_{-}(0)'=-\frac{\log p}{24}-\frac{1}{2}L(0,\chi)',
$$ 
so 
$$
D_{+}(0)'-D_{-}(0)'=L(0,\chi)'.
$$
Since $L(0,\chi)'=({\sqrt{p}}/2)L(1,\chi)$, it follows that 
$$
D_{+}(0)'-D_{-}(0)'=({\sqrt{p}}/2)L(1,\chi)
$$ 
which is positive (by the proof of Dirichlet's theorem on primes in progressions). 
In fact, by the class number formula the above difference is $h_k\log \varepsilon_K$ and we recover Petersson's limit
from \eqref{eq:pete1}.

In particular, the inequality 
$$
p_+(n)>p_-(n)\quad {\text{\rm holds~for~all}}\quad n>n_0(p).
$$
One may wonder if the fact that the inequality $p_+(n)>p_-(n)$ holds might be due to the fact that $1$ is a quadratic residue and being the smallest positive integer it likely contributes to a lot of elements counted by $p_+(n)$. Well, let us test it. Let $p_{1,+}(n)$ be the number of partitions of 
$n$ with parts that are $>1$ but quadratic residues modulo $p$. Then  
$$
D_{1,+}(s)=D_{+}(s)-1,
$$
so $D_{+,1}(0)=D_{+}(0)-1=-1$. It thus follows that
$$
\chi_{1,+}=\frac{2D_{1,+}(0)-3}{4}=-\frac{5}{4}\qquad {\text{\rm while}}\qquad \chi_{-}=\frac{2D_{-}(0)-3}{4}=-\frac{3}{4},
$$
therefore
$$
\frac{p_{1,+}(n)}{p_{-}(n)}=(1+o(1)) c_3 n^{-1/2}\qquad {\text{\rm as}}\qquad n\to\infty,
$$
where $c_3:=C_{1,+}/C_{-}$. The above asymptotic shows that the inequality $p_{1,+}(n)<p_{-}(n)$ holds for large $n$. So, indeed, if we eliminate the $1$'s from  the partitions of $p_{+}(n)$ we get a number of partitions much smaller (in fact, of a smaller order of magnitude asymptotically) than $p_{-}(n)$, whereas $p_{+}(n)$ and $p_{-}(n)$ are of the same order or magnitude, which can be indeed interpreted by saying  that the fact that the inequality $p_{+}(n)>p_{-}(n)$ holds for large $n$ is driven by the contribution of the $1$'s in the $p_+(n)$ side.

\end{document}